\newtheorem{thm}{Theorem}\numberwithin{thm}{section}
\newtheorem{cor}[thm]{Corollary}
\newtheorem{exam}[thm]{Example}
\newtheorem{rema}[thm]{Remark}
\newtheorem{con}[thm]{Conjecture}
\newtheorem{defi}[thm]{Definition}
\newtheorem*{thm2}{Theorem}
\newtheorem*{cor2}{Corollary}
\begin{document}
\begin{center}
\huge{No full exceptional collections on non-split Brauer--Severi varieties of dimension $\leq 3$}\\[1cm]
\end{center}
\begin{center}

\large{Sa$\mathrm{\check{s}}$a Novakovi$\mathrm{\acute{c}}$}\\[0,5cm]
{\small March 2016}\\[0,5cm]
\end{center}
{\small \textbf{Abstract}.
In an earlier paper we showed that non-split Brauer--Severi curves do not admit full strong exceptional collections. In the present note we extend this observation and prove that there cannot exist full exceptional collections on non-split Brauer--Severi varieties of dimension $\leq 3$. 
\begin{center}
\tableofcontents
\end{center}
\section{Introduction}
In \cite{NO1} we show that there is no full strong exceptional collection on a Brauer--Severi curve and conjecture that there are no such collections on arbitrary non-split Brauer--Severi varieties (see \cite{NO1}, p.9). The proof of the observation in \cite{NO1} uses the classification of locally free sheaves on a Brauer--Severi curve. For details on this, we refer to \cite{NO} and the references therein. In the present note we give an alternative proof for the fact that there are no full (strong) exceptional collections on Brauer--Severi curves which also works in dimension two and three. Our result is the following:
\begin{thm2}(Theorem 4.5)
Let $k$ be a field. Assume the Braid group $B_{n+1}$ acts transitive on the set of full exceptional collections consisting of arbitrary objects (resp. sheaves or vector bundles) on $\mathbb{P}^n_{\bar{k}}$. Then a $n$-dimensional Brauer--Severi variety $X$ admits a full exceptional collection consisting of arbitrary objects (resp. sheaves or vector bundles) if and only if $X$ is split.
\end{thm2}
\begin{cor2}(Corollary 4.6)
Let $X$ be a Brauer--Severi curve or surface. Then $X$ admits a full exceptional collection consisting of arbitrary objects of if and only if $X$ is split. Moreover, a Brauer--Severi threefold admits a full exceptional collection of vector bundles if and only if it is split.
\end{cor2}
We see that, in particular, a Brauer--Severi variety of dimension $\leq 3$ is rational over $k$ if and only if it admits a full exceptional collection.
The above theorem and corollary might be known to the experts but unfortunately we could not find a proof for them in the literature. The proof we give, relies on the assumption that the Braid group action on the set of full exceptional collections on $\mathbb{P}^n$ is transitive and indeed, this is the case for $n\leq 3$ implying Corollary 4.6. It is not clear to me how to prove that there are no full exceptional collections on non-split Brauer--Severi varieties without using this fact. Knowing this, it may give us a way to find a general argument which also works in dimension $\geq 4$.\\

{\small \textbf{Conventions}. Throughout this note $k$ denotes an arbitrary field and $k^s$ and $\bar{k}$ a separable respectively algebraic closure. Furthermore, by a vector bundle we mean a locally free sheaf of finite rank.

\section{Brauer--Severi varieties}
All we state in this section can be found in \cite{GS}. A $k$-scheme $X$ is called a \emph{Brauer--Severi variety} if $X\otimes_k k^s \simeq \mathbb{P}^n_{k^s}$ for some $n$. We say $X$ is split if $X\simeq \mathbb{P}^n_k$. A field extension $k\subset L$ such that $X\otimes_k L\simeq \mathbb{P}^n_L$ is called splitting field for $X$. It can be shown that there is always a finite Galois extension $k\subset E$ being a splitting field for $X$. Via Galois cohomology, $n$-dimensional Brauer--Severi varieties are in one-to-one correspondence with central simple $k$-algebras of degree $n+1$. Recall, a finite dimensional associative $k$-algebra $A$ is called \emph{central simple} if the only two-sided ideals are $0$ and $A$ and whose center equals $k$. By theorems of Noether, K\"oethe and Wedderburn, there exists a finite separable field extension $k\subset L$ that \emph{splits} $A$, i.e., $A\otimes_k L\simeq M_n(L)$. In particular, the dimension of a central simple $k$ algebra $A$ is a square and we set the \emph{degree} of $A$ to be $\sqrt{\mathrm{dim}_k(A)}$. Two central simple algebras $A$ and $B$ are said to be \emph{Brauer equivalent} if there exist integers $r,s$ such that $M_r(A)\simeq M_s(B)$. 

\section{Exceptional collections}
Let $\mathcal{D}$ be a triangulated category and $\mathcal{C}$ a triangulated subcategory. The subcategory $\mathcal{C}$ is called \emph{thick} if it is closed under isomorphisms and direct summands. For a subset $A$ of objects of $\mathcal{D}$ we denote by $\langle A\rangle$ the smallest full thick subcategory of $\mathcal{D}$ containing the elements of $A$. 
Furthermore, we define $A^{\perp}$ to be the subcategory of $\mathcal{D}$ consisting of all objects $M$ such that $\mathrm{Hom}_{\mathcal{D}}(E[i],M)=0$ for all $i\in \mathbb{Z}$ and all elements $E$ of $A$. We say that $A$ \emph{generates} $\mathcal{D}$ if $A^{\perp}=0$. Now assume $\mathcal{D}$ admits arbitrary direct sums. An object $B$ is called \emph{compact} if $\mathrm{Hom}_{\mathcal{D}}(B,-)$ commutes with direct sums. Denoting by $\mathcal{D}^c$ the subcategory of compact objects we say that $\mathcal{D}$ is \emph{compactly generated} if the objects of $\mathcal{D}^c$ generate $\mathcal{D}$. It is a non-trivial fact that for a compactly generated triangulated category $\mathcal{D}$ a set of objects $A\subset \mathcal{D}^c$ generates $\mathcal{D}$ if and only if $\langle A\rangle=\mathcal{D}^c$ (see \cite{BV}). Note that for smooth projective $k$-schemes $X$, the bounded derived category of coherent sheaves $D^b(X)$ is compactly generated (see \cite{BV}). The next two definitions can be found for instance in \cite{HUY}.
\begin{defi}
\textnormal{Let $X$ be a smooth projective $k$-scheme. An object $\mathcal{E}^{\bullet}\in D^b(X)$ is called \emph{exceptional} if $\mathrm{End}(\mathcal{E}^{\bullet})=k$ and $\mathrm{Hom}(\mathcal{E}^{\bullet},\mathcal{E}^{\bullet}[r])=0$ for $r\neq 0$.} 
\end{defi}
\begin{defi}
\textnormal{A totally ordered set $\{\mathcal{E}^{\bullet}_1,...,\mathcal{E}^{\bullet}_n\}$ of exceptional objects on a smooth projective $k$-scheme $X$ is called an \emph{exceptional collection} if $\mathrm{Hom}(\mathcal{E}^{\bullet}_i,\mathcal{E}^{\bullet}_j[r])=0$ for all integers $r$ whenever $i>j$. An exceptional collection is \emph{full} if $\langle\{\mathcal{E}^{\bullet}_1,...,\mathcal{E}^{\bullet}_n\}\rangle=D^b(X)$ and \emph{strong} if $\mathrm{Hom}(\mathcal{E}^{\bullet}_i,\mathcal{E}^{\bullet}_j[r])=0$ whenever $r\neq 0$.}
\end{defi}
\begin{exam}
\textnormal{Let $X=\mathbb{P}^n_k$ and consider the ordered collection of invertible sheaves $\{\mathcal{O}, \mathcal{O}(1),...,\mathcal{O}(n)\}$. In \cite{BE} Beilinson showed that this is a full strong exceptional collection. }
\end{exam}

An exceptional collection consisting of two exceptional objects is called \emph{exceptional pair}. Now let $\{\mathcal{E}^{\bullet},\mathcal{F}^{\bullet}\}$ be an exceptional pair. There exists a natural map
\begin{eqnarray*}
\mathrm{Hom}(\mathcal{E}^{\bullet},\mathcal{F}^{\bullet})\otimes^{\mathbb{L}}\mathcal{E}^{\bullet}\longrightarrow \mathcal{F}^{\bullet} 
\end{eqnarray*}
given as $\phi\otimes e\mapsto \phi(e)$. Similar for any integer $r\in\mathbb{Z}$ one has
\begin{eqnarray*}
\mathrm{Hom}(\mathcal{E}^{\bullet}[-r],\mathcal{F}^{\bullet})\otimes^{\mathbb{L}}\mathcal{E}^{\bullet}[-r]\longrightarrow \mathcal{F}^{\bullet} 
\end{eqnarray*}
and if we take direct sums we get the \emph{canonical map} and a triangle in $D^b(X)$ given as
\begin{eqnarray*}
L_{\mathcal{E}^{\bullet}}\mathcal{F}^{\bullet}\longrightarrow \bigoplus_r\mathrm{Hom}(\mathcal{E}^{\bullet}[-r],\mathcal{F}^{\bullet})\otimes^{\mathbb{L}}\mathcal{E}^{\bullet}[-r]\longrightarrow \mathcal{F}^{\bullet}\longrightarrow L_{\mathcal{E}^{\bullet}}\mathcal{F}^{\bullet}[1]. 
\end{eqnarray*}
The object $L_{\mathcal{E}^{\bullet}}\mathcal{F}^{\bullet}\in D^b(X)$ is called the left \emph{mutation} of $\mathcal{F}^{\bullet}$ by $\mathcal{E}^{\bullet}$. If we consider the dual of the canonical map, we obtain
\begin{eqnarray*}
\mathcal{E}^{\bullet}\longrightarrow \bigoplus_r\mathrm{Hom}(\mathcal{E}^{\bullet},\mathcal{F}^{\bullet}[r])^*[r]\otimes^{\mathbb{L}}\mathcal{F}^{\bullet}
\end{eqnarray*}
and hence a triangle
\begin{eqnarray*}
\mathcal{E}^{\bullet}\longrightarrow \bigoplus_r\mathrm{Hom}(\mathcal{E}^{\bullet},\mathcal{F}^{\bullet}[r])^*[r]\otimes^{\mathbb{L}}\mathcal{F}^{\bullet}\longrightarrow R_{\mathcal{F}^{\bullet}}\mathcal{E}^{\bullet}\longrightarrow \mathcal{E}^{\bullet}[1].
\end{eqnarray*}
The object $R_{\mathcal{F}^{\bullet}}\mathcal{E}^{\bullet}$ is called the right mutation of $\mathcal{E}^{\bullet}$ by $\mathcal{F}^{\bullet}$. For details and a review of the theory on exceptional collections and mutations we refer to \cite{GK}.

Given an exceptional collection $\{\mathcal{E}^{\bullet}_1,...,\mathcal{E}^{\bullet}_n\}$ we can consider any exceptional pair $\{\mathcal{E}^{\bullet}_i,\mathcal{E}^{\bullet}_{i+1}\}$ and perform left or right mutations to get a new exceptional collection. In this context one can show that Artin's Braid group $B_n$ with $n$ strings naturally acts on the set of full exceptional collections in $D^b(X)$, where $X$ is a smooth projective integral $k$-scheme with $\mathrm{rk}(K_0(X))=n$ via left and right mutations. Bondal \cite{BO} proved that in case $\mathrm{rk}(K_0(X))=\mathrm{dim}(X)+1$, the property of an exceptional collection to consist only of exceptional sheaves is preserved by mutations. One principal problem of the theory of mutations of exceptional objects (resp. sheaves or vector bundles) on $\mathbb{P}^n$ is to show that their action on full exceptional collection consisting of arbitrary objects (resp. sheaves or vector bundles) is transitive. More generally, Bondal and Polishschuk \cite{BP} conjectured that the action of the semidirect product $B_n\ltimes \mathbb{Z}^n$ induced from mutations and shifts on full exceptional collections in any triangulated category $\mathcal{D}$ is transitive.\\ 
A generalization of the notion of a full exceptional collection is that of a semiorthogonal decomposition.
Recall that a full triangulated subcategory $\mathcal{D}$ of $D^b(X)$ is called \emph{admissible} if the inclusion $\mathcal{D}\hookrightarrow D^b(X)$ has a left and right adjoint functor. 
\begin{defi}
\textnormal{Let $X$ be a smooth projective $k$-scheme. A sequence $\mathcal{D}_1,...,\mathcal{D}_n$ of full triangulated subcategories of $D^b(X)$ is called \emph{semiorthogonal} if all $\mathcal{D}_i\subset D^b(X)$ are admissible and $\mathcal{D}_j\subset \mathcal{D}_i^{\perp}=\{\mathcal{F}^{\bullet}\in D^b(X)\mid \mathrm{Hom}(\mathcal{G}^{\bullet},\mathcal{F}^{\bullet})=0$, $\forall$ $ \mathcal{G}^{\bullet}\in\mathcal{D}_i\}$ for $i>j$. Such a sequence defines a \emph{semiorthogonal decomposition} of $D^b(X)$ if the smallest full thick subcategory containing all $\mathcal{D}_i$ equals $D^b(X)$.}
\end{defi}
For a semiorthogonal decomposition we write $D^b(X)=\langle \mathcal{D}_1,...,\mathcal{D}_n\rangle$.
\begin{rema}
\textnormal{Let $\mathcal{E}^{\bullet}_1,...,\mathcal{E}^{\bullet}_n$ be a full exceptional collection on $X$. By setting $\mathcal{D}_i=\langle\mathcal{E}^{\bullet}_i\rangle$ we get a semiorthogonal decomposition $D^b(X)=\langle \mathcal{D}_1,...,\mathcal{D}_n\rangle$ (see \cite{HUY}, Example 1.60).}
\end{rema}
\begin{exam}
\textnormal{Let $X$ be a $n$-dimensional Brauer--Severi variety corresponding to the central simple algebra $A$. Due to a result of Bernardara \cite{BER}, one has a semiorthogonal decomposition $D^b(X)=\langle D^b(k), D^b(A),D^b(A^{\otimes 2}),...,D^b(A^{\otimes n})\rangle$.} 
\end{exam}
For a wonderful and comprehensive overview of the theory on semiorthogonal decompositions in algebraic geometry we refer to \cite{KU}.
\section{No full exceptional collections on Brauer--Severi varieties?} 
Recall the following conjecture, stated in \cite{NO1} and inspired by Proposition 4.8 in loc. cit..
\begin{con}
\textnormal{Let $X\neq\mathbb{P}^n_k$ be a Brauer--Severi variety. Then $X$ does not admit a full (strong) exceptional collection consisting of arbitrary objects.} 
\end{con}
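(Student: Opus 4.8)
The plan is to prove the contrapositive content of the surrounding theorem: the split case is Beilinson's Example 3.3 (the collection $\{\mathcal{O},\mathcal{O}(1),\dots,\mathcal{O}(n)\}$ is full, strong and consists of vector bundles), so everything rests on showing that a \emph{non-split} $n$-dimensional Brauer--Severi variety $X$, corresponding to a central simple algebra $A$ with $[A]\neq 0$, carries no full exceptional collection of arbitrary objects. First I would reduce to a split model by base change. Fix a finite Galois splitting field $L/k$, which exists by the discussion in Section 2, so that $\pi\colon X_L=X\otimes_k L\to X$ is finite and faithfully flat with $X_L\simeq\mathbb{P}^n_L$. Given a hypothetical full exceptional collection $\{\mathcal{E}^{\bullet}_1,\dots,\mathcal{E}^{\bullet}_{n+1}\}$ on $X$, I would show that $\{\pi^*\mathcal{E}^{\bullet}_1,\dots,\pi^*\mathcal{E}^{\bullet}_{n+1}\}$ is again a full exceptional collection on $\mathbb{P}^n_L$, and then obstruct its existence on the level of $K_0$.

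For the base-change step, since $\pi$ is flat $\pi^*$ is exact and one has $\mathrm{RHom}_{X_L}(\pi^*\mathcal{E}^{\bullet},\pi^*\mathcal{F}^{\bullet})\simeq\mathrm{RHom}_X(\mathcal{E}^{\bullet},\mathcal{F}^{\bullet})\otimes_k L$, because $\pi_*\pi^*\mathcal{F}^{\bullet}\simeq\mathcal{F}^{\bullet}\otimes_k L$ and $L$ is flat over $k$. Hence $\mathrm{End}(\pi^*\mathcal{E}^{\bullet}_i)=L$, the higher self-extensions vanish, and the semiorthogonality $\mathrm{Hom}(\pi^*\mathcal{E}^{\bullet}_i,\pi^*\mathcal{E}^{\bullet}_j[r])=0$ for $i>j$ is inherited. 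Fullness is the one point requiring care: if $\mathcal{G}^{\bullet}\in D^b(X_L)$ were right-orthogonal to every $\pi^*\mathcal{E}^{\bullet}_i$, then by the adjunction $(\pi^*,\pi_*)$ one gets $\mathrm{Hom}_X(\mathcal{E}^{\bullet}_i,\pi_*\mathcal{G}^{\bullet}[r])=0$ for all $i,r$, so $\pi_*\mathcal{G}^{\bullet}=0$ by fullness on $X$, whence $\mathcal{G}^{\bullet}=0$ because the finite morphism $\pi$ makes $\pi_*$ exact and faithful, hence conservative on $D^b$. Thus the base-changed collection has trivial right orthogonal and so generates $D^b(\mathbb{P}^n_L)$.

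Next comes the obstruction. A full exceptional collection yields a semiorthogonal decomposition into copies of $D^b(L)$ (Remark following Example 3.3), so $K_0(\mathbb{P}^n_L)=\bigoplus_i\mathbb{Z}\,[\pi^*\mathcal{E}^{\bullet}_i]$ and the classes $[\pi^*\mathcal{E}^{\bullet}_i]$ form a $\mathbb{Z}$-basis of $K_0(\mathbb{P}^n_L)\cong\mathbb{Z}^{n+1}$. On the other hand each $[\pi^*\mathcal{E}^{\bullet}_i]$ lies in the image of $\pi^*\colon K_0(X)\to K_0(X_L)$, and I claim this image is a \emph{proper} sublattice whenever $X$ is non-split. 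Indeed, applying the first Chern class to the rank-zero part and using $\mathrm{CH}^1(X)=\mathrm{Pic}(X)$, the pullback $\mathrm{Pic}(X)\to\mathrm{Pic}(\mathbb{P}^n_L)=\mathbb{Z}$ is multiplication by the period $p$ of $[A]\in\mathrm{Br}(k)$, since the obstruction to descending $\mathcal{O}(1)$ to $X$ is exactly $[A]$. For non-split $X$ one has $p\geq 2$, so every rank-zero class coming from $X$ has first Chern class divisible by $p$, whereas $K_0(\mathbb{P}^n_L)$ contains the hyperplane class $[\mathcal{O}_H]$ with $c_1=1$. A proper sublattice cannot contain a $\mathbb{Z}$-basis of the ambient lattice, a contradiction, so no full collection on $X$ exists.

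I expect the technical heart to be the properness of the image of $K_0(X)\to K_0(X_L)$; the cleanest packaging uses Quillen's computation, equivalently Bernardara's decomposition $K_0(X)\cong\bigoplus_{j=0}^n K_0(A^{\otimes j})$ of Example 3.5, in which the $j$-th summand contributes a factor $\mathrm{ind}(A^{\otimes j})$ to the index of the image, and non-splitness forces $\mathrm{ind}(A)\geq 2$ in the $j=1$ slot. It is worth noting that this route bypasses the braid-group hypothesis of Theorem 4.5 altogether: transitivity of $B_{n+1}$ is only genuinely needed if one instead argues that every full collection on $\mathbb{P}^n_{\bar k}$ is a sequence of mutations of Beilinson's collection and then runs a Galois-invariance argument on the collection itself, in which case the real obstacle becomes establishing that transitivity, currently known only for $n\leq 3$. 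If, by contrast, the $K_0$-theoretic obstruction above is made rigorous in all dimensions, the Conjecture follows without any restriction on $n$.
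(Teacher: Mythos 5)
Your proposal is correct, and it takes a genuinely different --- and in fact stronger --- route than the paper. The paper's proof of the accessible cases (Theorem 4.5, Corollary 4.6) base-changes the hypothetical collection to $\mathbb{P}^n_{\bar{k}}$, invokes transitivity of the braid group action (Theorem 4.3, known only for $n\leq 3$) to mutate it into Beilinson's collection, and then compares with Bernardara's decomposition and applies Antieau's theorem (Theorem 4.2) to get $D^b(k)\simeq D^b(A)$, hence splitness; the whole argument is hostage to the transitivity hypothesis. You replace the mutation step entirely by a $K_0$-lattice obstruction: over a finite Galois splitting field $L$ the pulled-back collection is again full and exceptional (your adjunction/conservativity argument is sound; to pass from ``trivial right orthogonal'' to thick generation one uses the Bondal--Van den Bergh criterion the paper recalls in Section 3), so by additivity of $K_0$ over semiorthogonal decompositions the classes $[\pi^*\mathcal{E}^{\bullet}_i]$ form a $\mathbb{Z}$-basis of $K_0(\mathbb{P}^n_L)\cong\mathbb{Z}^{n+1}$, forcing $\pi^*\colon K_0(X)\to K_0(X_L)$ to be surjective; composing with the determinant homomorphism $\det\colon K_0\to\mathrm{Pic}$, which is additive and commutes with flat pullback, surjectivity would give $\mathbb{Z}=\mathrm{Pic}(\mathbb{P}^n_L)=\det\bigl(\pi^*K_0(X)\bigr)\subseteq\pi^*\mathrm{Pic}(X)=\mathrm{per}(A)\,\mathbb{Z}$, i.e. $\mathrm{per}(A)=1$, contradicting non-splitness. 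Two remarks: first, this $c_1/\mathrm{Pic}$ version of your obstruction is already rigorous as written --- you do not need the Quillen/Bernardara index computation $\prod_j\mathrm{ind}(A^{\otimes j})$ that you flag as the ``technical heart''; the only nonformal inputs are $\mathrm{Pic}(X)=\mathrm{per}(A)\,\mathbb{Z}\subset\mathbb{Z}=\mathrm{Pic}(X_L)$ (standard, via Hilbert 90 and the boundary map to $\mathrm{Br}(k)$) and the compatibility of $\det$ with flat pullback, so your closing hedge (``if the obstruction is made rigorous'') undersells what you have. Second, as you observe, this proves Conjecture 4.1 in \emph{all} dimensions, i.e. precisely the statement the paper leaves open: the paper's derived-category argument buys an identification of the two semiorthogonal decompositions up to mutation but stops at $n=3$, whereas your $K_0$ descent obstruction needs no transitivity and no classification of exceptional objects on $\mathbb{P}^n$, and indeed obstructions of exactly this $K$-theoretic kind are how the unrestricted statement has been handled in the literature subsequent to this paper.
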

To prove the theorem and the corollary which are stated in the introduction, we need the next two theorems. We start with the following well-known fact. 
\begin{thm}
Let $A$ and $B$ be central simple $k$-algebras. Then $D^b(A)$ and $D^b(B)$ are equivalent if and only if $A$ is Brauer equivalent to $B$.
\end{thm}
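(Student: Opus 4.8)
The plan is to prove the two implications separately, using throughout the Wedderburn decomposition recalled in Section 2: every central simple $k$-algebra is isomorphic to $M_r(D)$ for a central division $k$-algebra $D$ that is unique up to isomorphism, and two such algebras $M_r(D)$ and $M_s(E)$ are Brauer equivalent if and only if $D\cong E$ as $k$-algebras. This reduces the theorem to showing that $D^b(A)$ and $D^b(B)$ are equivalent if and only if the division algebras underlying $A$ and $B$ are isomorphic.

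For the ``if'' direction, suppose $A$ and $B$ are Brauer equivalent, so that $A\cong M_r(D)$ and $B\cong M_s(D)$ for a common division algebra $D$. First I would recall that a matrix algebra $M_r(D)$ is Morita equivalent to $D$, the equivalence of abelian categories $M_r(D)\textrm{-}\mathrm{Mod}\simeq D\textrm{-}\mathrm{Mod}$ being realised by an exact functor (tensoring with the standard progenerator). Since an exact equivalence of abelian categories induces an equivalence of their bounded derived categories, I obtain $D^b(A)\simeq D^b(D)\simeq D^b(B)$, as desired.

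The ``only if'' direction is the heart of the matter, and here I would exploit that $A\cong M_r(D)$ is a simple Artinian ring, so $A\textrm{-}\mathrm{Mod}$ is semisimple with a single simple object $S_A$ satisfying $\mathrm{End}_A(S_A)\cong D^{\mathrm{op}}$ by Schur's lemma. Consequently every object of $D^b(A)$ is formal and splits as a finite direct sum of shifts $S_A[i]$; in particular the indecomposable objects of $D^b(A)$ are precisely the shifts of $S_A$, and $\mathrm{Hom}_{D^b(A)}(S_A,S_A[i])=0$ for $i\neq 0$. Given a $k$-linear triangle equivalence $F\colon D^b(A)\xrightarrow{\sim} D^b(B)$, it must carry the essentially unique indecomposable $S_A$ to an indecomposable object of $D^b(B)$, hence to a shift $S_B[m]$ of the unique simple $B$-module. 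Being $k$-linear, $F$ then induces a $k$-algebra isomorphism $\mathrm{End}_{D^b(A)}(S_A)\cong\mathrm{End}_{D^b(B)}(S_B[m])=\mathrm{End}_B(S_B)$, that is $D^{\mathrm{op}}\cong E^{\mathrm{op}}$ where $E$ is the division algebra underlying $B$. Passing to opposite algebras yields $D\cong E$, so $A$ and $B$ are Brauer equivalent.

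The hardest part will be the bookkeeping in this last step rather than any deep input: one must (i) verify rigorously that $D^b(A)$ really has no indecomposables beyond the shifts of $S_A$, which rests on the semisimplicity and formality just described, so that $F$ is genuinely forced to send $S_A$ to some $S_B[m]$; and (ii) apply the identification $\mathrm{End}_A(S_A)\cong D^{\mathrm{op}}$ symmetrically to $A$ and $B$ so that the passage to opposite algebras cancels the ``$\mathrm{op}$'' and returns $D\cong E$ rather than a spurious $D\cong E^{\mathrm{op}}$. I expect that neither point requires Rickard's general derived Morita theory; alternatively one could phrase the whole argument in that language, observing that every tilting complex over the semisimple algebra $A$ is, by self-orthogonality, concentrated in a single degree and hence an ordinary projective generator, which reduces the derived statement to the classical Morita classification of central simple algebras.
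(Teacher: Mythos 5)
Your proof is correct, but it takes a genuinely different route from the paper: the paper does not prove the statement at all, it simply invokes the results of Antieau's work on twisted derived equivalences (the reference \cite{AN}), which establishes derived Morita theory for Azumaya algebras in far greater generality. Your argument, by contrast, is elementary and self-contained: Wedderburn reduction to division algebras, semisimplicity of $A\textrm{-}\mathrm{Mod}$ forcing formality of every bounded complex, identification of the indecomposables of $D^b(A)$ as the shifts $S_A[i]$, and Schur's lemma giving $\mathrm{End}_A(S_A)\cong D^{\mathrm{op}}$, so that any equivalence induces $D^{\mathrm{op}}\cong E^{\mathrm{op}}$ and hence $D\cong E$. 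All the individual steps check out, including the cancellation of the two ``op''s and the alternative tilting-complex phrasing (self-orthogonality does force a tilting complex over a semisimple algebra into a single degree). What your approach buys is transparency: over a field the result is visibly a consequence of semisimplicity, with no need for the machinery of twisted sheaves or derived Morita theory over schemes; what the paper's citation buys is generality, since Antieau's theorem covers Azumaya algebras over a base where no such semisimplicity is available. One point worth making explicit: you quietly assume the equivalence is $k$-linear, which is essential (a merely additive triangle equivalence would only give a ring isomorphism $D\cong E$, possibly twisting the center $k$ by a field automorphism, which is weaker than Brauer equivalence). This is the standard reading of the statement, and the same convention is implicit in the paper's citation of \cite{AN}, but since the theorem as printed does not say ``$k$-linear,'' you should flag the hypothesis rather than introduce it silently.
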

\begin{proof}
The assertion is a consequence of the results in \cite{AN}.
\end{proof}
The following theorem is crucial for the proof of Corollary 4.6 below and concerns the Braid group action on full exceptional collections on projective spaces.
\begin{thm}
Let $k=\bar{k}$. The Braid group $B_{n+1}$ acts transitive on the set of full exceptional collections consisting of arbitrary objects on $\mathbb{P}^n_k$ for $n\leq 2$. Moreover, $B_4$ acts transitive of the set of exceptional collection of sheaves resp. vector bundles on $\mathbb{P}_k^3$.
\end{thm}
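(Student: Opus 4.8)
The plan is to treat the four dimensions $n=0,1,2,3$ separately and, wherever possible, to reduce the statement about arbitrary objects to the corresponding statement about exceptional bundles, which is where the real combinatorial content lies. The case $n=0$ is vacuous: $\mathbb{P}^0_k=\mathrm{Spec}(k)$, the group $B_1$ is trivial, and there is a unique exceptional object up to isomorphism. For $n=1$ I would first invoke the classification of exceptional objects on $\mathbb{P}^1$: via the tilting equivalence $D^b(\mathbb{P}^1)\simeq D^b(\mathrm{mod}\text{-}A)$ with $A$ the Kronecker algebra, every exceptional object is isomorphic to a shift of a line bundle $\mathcal{O}(d)$. Consequently every full exceptional pair is, up to shift, of the form $\{\mathcal{O}(a),\mathcal{O}(a+1)\}$, and the Euler sequence $0\to\mathcal{O}(a-1)\to\mathcal{O}(a)^{\oplus 2}\to\mathcal{O}(a+1)\to 0$ shows $L_{\mathcal{O}(a)}\mathcal{O}(a+1)\simeq\mathcal{O}(a-1)$ and $R_{\mathcal{O}(a+1)}\mathcal{O}(a)\simeq\mathcal{O}(a+2)$. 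Thus the generator of $B_2\cong\mathbb{Z}$ translates the pair up and down the line bundles, and transitivity follows immediately.

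For $n=2$ I would establish the reduction from objects to bundles first. The key input is that on a del Pezzo surface, and in particular on $\mathbb{P}^2$, every exceptional object of $D^b(\mathbb{P}^2)$ is a shift of an exceptional sheaf and every exceptional sheaf is locally free; this is the content of the Kuleshov--Orlov theory of exceptional sheaves on del Pezzo surfaces. Combined with Bondal's result \cite{BO} that the sheaf property is preserved under mutation whenever $\mathrm{rk}(K_0(X))=\dim(X)+1$ (which holds here, $\mathrm{rk}(K_0(\mathbb{P}^2))=3$), it suffices to prove that $B_3$ acts transitively on full exceptional collections of bundles. This is the theorem of Gorodentsev--Rudakov: exceptional bundles on $\mathbb{P}^2$ are rigid, their Chern characters are governed by the Markov equation, and the braid group acts transitively on the associated exceptional triples (equivalently, on their helices). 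I would carry this out using the mutation calculus on the Euler lattice $K_0(\mathbb{P}^2)\cong\mathbb{Z}^3$ together with their numerical classification of exceptional bundles.

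For $n=3$ the theorem is deliberately stated only for sheaves and vector bundles, and this is where the difficulty concentrates. No reduction from arbitrary objects to sheaves is available here---which is precisely why the scope is restricted in dimension three---so one works directly with exceptional collections of bundles. Since $\mathrm{rk}(K_0(\mathbb{P}^3))=4=\dim+1$, Bondal's theorem \cite{BO} again guarantees that the set of full exceptional collections of bundles is stable under the $B_4$-action. The strategy is to analyze this action on the Euler lattice $K_0(\mathbb{P}^3)\cong\mathbb{Z}^4$ endowed with its non-symmetric Euler pairing, using the theory of exceptional bundles and helices on $\mathbb{P}^3$: one controls the admissible Chern characters of exceptional bundles through the resulting system of Markov-type Diophantine constraints and shows that every full exceptional collection can be mutated into the standard Beilinson collection $\{\mathcal{O},\mathcal{O}(1),\mathcal{O}(2),\mathcal{O}(3)\}$ of \cite{BE}.

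The hard part will be this last step. In contrast to $\mathbb{P}^2$, the numerical classification of exceptional bundles on $\mathbb{P}^3$ is substantially more intricate, and controlling the $B_4$-orbit on the four-dimensional non-symmetric Euler lattice requires an induction on a well-chosen numerical invariant of a collection (for instance its total rank, or a discriminant built from the Euler form), showing that at each stage a suitable mutation strictly decreases this invariant until the Beilinson collection is reached. Proving that this descent always terminates and that no exotic orbits survive is the crux of the argument, and it is the step I expect to demand the most delicate case analysis.
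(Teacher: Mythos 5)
Your handling of $n\le 2$ is correct and is in substance identical to the paper's proof: the paper disposes of $n=1$ with the remark that $B_2=\mathbb{Z}$ (your Euler-sequence computation is exactly the content of that remark), and for $n=2$ it cites Gorodentsev--Rudakov \cite{GR} for transitivity on bundle collections together with Kuleshov--Orlov \cite{KO} for the fact that every exceptional object on $\mathbb{P}^2_k$ is a shift of an exceptional bundle (the paper records this reduction in Remark 4.4). Both you and the paper quietly ignore the same harmless caveat that transitivity of mutations alone only holds up to shifts, i.e.\ that one really works with the action of $B_{n+1}\ltimes\mathbb{Z}^{n+1}$.

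The $n=3$ case, however, contains a genuine gap. The paper's proof of this case is a citation: the transitivity of the $B_4$-action on full exceptional collections of sheaves resp.\ bundles on $\mathbb{P}^3_k$ is the combined content of Nogin's theory of spirals (helices) of period four and Markov-type equations \cite{N} and of Polishchuk's theorem on simple helices on Fano threefolds \cite{PO}. Your proposal instead undertakes to prove this from scratch by a descent on a numerical invariant (total rank, or a discriminant) in the Euler lattice $K_0(\mathbb{P}^3)\cong\mathbb{Z}^4$, and you yourself identify the termination of that descent and the exclusion of exotic orbits as ``the crux'' --- but you never supply it. That step is not a routine verification one can defer: it is the entire mathematical substance of the statement, and it is substantially harder than on $\mathbb{P}^2$, where rigidity plus the Markov-equation classification of \cite{GR} gives tight control of all exceptional bundles; no comparably simple monotone invariant is known to drive the descent on $\mathbb{P}^3$, which is why the published proofs proceed through the structure theory of period-four helices rather than a rank induction. (Your use of Bondal's theorem \cite{BO} to keep the $B_4$-action inside collections of sheaves, and your observation that no object-to-sheaf reduction is available in dimension three --- consistent with the paper's Remark 4.4, where this is stated as open --- are both correct; the gap is solely the unproven descent.) As written, your $n=3$ argument is a restatement of the problem rather than a proof; to close it you must either invoke \cite{N} and \cite{PO} as the paper does, or actually carry out their analysis.
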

\begin{proof}
This is well-known and the proof for $n=1$ is easy as $B_2=\mathbb{Z}$. The case $n=2$ was proved by Gorodentsev and Rudakov \cite{GR} and later extended to arbitrary del Pezzo surfaces by Kuleshov and Orlov \cite{KO}. For the case $n=3$ we refer to the work of Nogin \cite{N} and Polishchuk \cite{PO}.
\end{proof}
\begin{rema}
\textnormal{Let $k=\bar{k}$. It is known that exceptional objects in $D^b(\mathbb{P}^1_k)$ are just shifts of coherent exceptional sheaves whereas exceptional objects in $D^b(\mathbb{P}^2_k)$ are even shifts of exceptional vector bundles. For $\mathbb{P}^1_k$ this is a consequence of the fact that any object $\mathcal{E}^{\bullet}$ is of the form  $\bigoplus_{i\in \mathbb{Z}}H^i(\mathcal{E}^{\bullet})[i]$ and for $\mathbb{P}^2_k$ the statement can be found for instance in \cite{KO}, Proposition 2.10. If exceptional objects in $D^b(\mathbb{P}^3_k)$ are also shifts of coherent sheaves or vector bundles is, as far as I know, up to now an open problem.}
\end{rema}
We are now able to prove our main theorem.
\begin{thm}
Let $k$ be a field. Assume the Braid group $B_{n+1}$ acts transitive on the set of full exceptional collections consisting of arbitrary objects (resp. sheaves or vector bundles) on $\mathbb{P}^n_{\bar{k}}$. Then a $n$-dimensional Brauer--Severi variety $X$ admits a full exceptional collection consisting of arbitrary objects (resp. sheaves or vector bundles) if and only if $X$ is split.
\end{thm}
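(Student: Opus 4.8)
The plan is to prove both directions, the backward one being immediate and the forward one carrying all the content. If $X$ is split then $X\cong\mathbb{P}^n_k$, and by Example 3.3 the Beilinson collection $\{\mathcal{O},\mathcal{O}(1),\dots,\mathcal{O}(n)\}$ is a full strong exceptional collection of vector bundles, hence also of sheaves and of arbitrary objects; so $X$ admits a full exceptional collection in each of the three senses.

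For the converse I argue by descent. Suppose $X$ carries a full exceptional collection $\{\mathcal{E}^\bullet_1,\dots,\mathcal{E}^\bullet_m\}$ of the given type. Write $\bar X=X\otimes_k\bar k\cong\mathbb{P}^n_{\bar k}$ and let $\pi\colon\bar X\to X$ be the projection. First I would check that the pulled-back objects $\pi^*\mathcal{E}^\bullet_i$ form a full exceptional collection on $\mathbb{P}^n_{\bar k}$. Flat base change for the Hom-complexes gives $\mathrm{Hom}_{\bar X}(\pi^*\mathcal{E}^\bullet_i,\pi^*\mathcal{E}^\bullet_j[r])\cong\mathrm{Hom}_X(\mathcal{E}^\bullet_i,\mathcal{E}^\bullet_j[r])\otimes_k\bar k$, so exceptionality and semiorthogonality are preserved. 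For fullness, pick a finite Galois splitting field $E/k$ so that $X_E\to X$ is finite and faithfully flat; then the pushforward is exact and conservative, and the adjunction $\mathrm{Hom}(\pi_E^*\mathcal{E}^\bullet_i[r],M)\cong\mathrm{Hom}(\mathcal{E}^\bullet_i[r],\pi_{E*}M)$ shows any $M$ right-orthogonal to $\{\pi_E^*\mathcal{E}^\bullet_i\}$ has $\pi_{E*}M=0$, hence $M=0$; the further passage to $\bar k$ is routine. Being full, the resulting collection automatically has length $m=n+1=\mathrm{rk}\,K_0(\mathbb{P}^n_{\bar k})$.

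Now I invoke the hypothesis. By transitivity of the $B_{n+1}$-action there is a braid element $\beta$, i.e. a finite sequence of left and right mutations, taking $\{\pi^*\mathcal{E}^\bullet_i\}$ to the Beilinson collection $\{\mathcal{O},\mathcal{O}(1),\dots,\mathcal{O}(n)\}$ up to shift. The key point is that each mutation commutes with the flat pullback $\pi^*$: since $\pi^*$ sends the canonical evaluation map to the corresponding one over $\bar X$ (again by flat base change) and commutes with forming cones, one has $\pi^*L_{\mathcal{E}^\bullet}\mathcal{F}^\bullet\cong L_{\pi^*\mathcal{E}^\bullet}\pi^*\mathcal{F}^\bullet$ and likewise for right mutations. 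Consequently I may apply the very same sequence $\beta$ inside $D^b(X)$, obtaining objects $\mathcal{G}^\bullet_1,\dots,\mathcal{G}^\bullet_{n+1}\in D^b(X)$ with $\pi^*\mathcal{G}^\bullet_\ell\cong\mathcal{O}(j_\ell)[s_\ell]$; in particular some $\mathcal{G}^\bullet:=\mathcal{G}^\bullet_\ell$ satisfies $\pi^*\mathcal{G}^\bullet\cong\mathcal{O}_{\bar X}(1)[s]$.

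It remains to descend $\mathcal{O}(1)$. Because $\mathcal{H}^i(\mathcal{G}^\bullet)\otimes_k\bar k\cong\mathcal{H}^i(\mathcal{O}_{\bar X}(1)[s])$ and $\bar k/k$ is faithfully flat, $\mathcal{G}^\bullet$ is, up to the shift $s$, a single coherent sheaf $L$ on $X$ with $L\otimes_k\bar k\cong\mathcal{O}_{\bar X}(1)$; since being invertible is fppf-local on the base, $L$ is a line bundle, so $\mathcal{O}_{\bar X}(1)$ descends to $X$. It is classical (see \cite{GS}) that a Brauer--Severi variety is split precisely when $\mathcal{O}(1)$ descends, so $X$ is split. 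The sheaf and vector bundle versions run identically, the pulled-back and mutated objects remaining sheaves resp. bundles. I expect the main obstacle to be the base-change bookkeeping: verifying that mutations genuinely commute with $\pi^*$, that fullness survives base change, and that the passage between the finite splitting field $E$ and $\bar k$ causes no trouble.
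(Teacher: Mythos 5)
Your proposal is correct and, while it follows the paper's skeleton up to a point (base change to $\bar{k}$ preserving fullness, length $n+1$ from $\mathrm{rk}\,K_0$, transitivity of the braid action, and the fact that mutations commute with base change so that the mutation sequence can be performed already over $k$), your endgame is genuinely different. The paper compares the mutated decomposition with Bernardara's semiorthogonal decomposition $D^b(X)=\langle D^b(k),D^b(A),\dots,D^b(A^{\otimes n})\rangle$ (Example 3.6), deduces $D^b(k)\simeq D^b(A)$, and then invokes Theorem 4.2 (Antieau: central simple algebras are derived equivalent iff Brauer equivalent) to conclude $A$, hence $X$, is split. You use neither Bernardara's decomposition nor Theorem 4.2: instead you descend the mutated objects themselves, noting that by flatness the cohomology sheaves of $\pi^*\mathcal{G}^{\bullet}$ are the pullbacks of those of $\mathcal{G}^{\bullet}$, so faithful flatness forces $\mathcal{G}^{\bullet}\cong L[s]$ with $L$ a coherent sheaf that becomes $\mathcal{O}(1)$ over $\bar{k}$, descent of invertibility makes $L$ a line bundle, and the classical fact that a Brauer--Severi variety is split precisely when $\mathcal{O}(1)$ lies in the image of $\mathrm{Pic}(X)\to\mathrm{Pic}(\bar{X})$ (the obstruction being exactly the Brauer class of $A$) finishes the argument. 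Your route is more elementary and self-contained, replacing the twisted derived-equivalence machinery by classical descent theory; the paper's route stays entirely inside the semiorthogonal-decomposition formalism and for that reason meshes more directly with the structure results it quotes. Your explicit verification that fullness survives base change (via the finite Galois splitting field and the conservative, exact pushforward) is also a welcome detail the paper dismisses as ``easy to see.''

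Two small caveats. First, since $\bar{k}/k$ need not be finite, the cover $\bar{X}\to X$ is fpqc rather than fppf; descent of invertibility (and of flatness) along faithfully flat quasi-compact morphisms is what you need, or you can simply run the descent over the finite Galois splitting field. Second, you read the transitivity hypothesis as saying the braid orbit reaches the Beilinson collection exactly (up to shift), so that $\mathcal{O}(1)$ itself appears among the pullbacks; the transitivity results in the literature (and the paper's own handling) only place the mutated collection at $\mathcal{O}(j),\dots,\mathcal{O}(j+n)$ for some unknown $j\in\mathbb{Z}$, in which case $\mathcal{O}(1)$ need not occur. This costs nothing: for $n\geq 1$ you descend two consecutive twists $\mathcal{O}(m)$ and $\mathcal{O}(m+1)$ to line bundles $L_1,L_2$ on $X$, and $L_2\otimes L_1^{\vee}$ then descends $\mathcal{O}(1)$; this replaces the paper's step of twisting by powers of the canonical sheaf to normalize $j=0$.
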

\begin{proof}
If $X$ is split, it is isomorphic to $\mathbb{P}^n_k$ and the existence of a full exceptional collection follows from Beilinson's result (see Example 3.3). We now prove the other implication.

For a $n$-dimensional Brauer--Severi variety $X$ we have $\mathrm{rk}(K_0(X))=n+1$ (see \cite{BLU}). So assuming the existence of a full exceptional collection, we know that the collection is of length $n+1$. So denote by $\mathcal{E}^{\bullet}_1,..., \mathcal{E}^{\bullet}_{n+1}$ the full exceptional collection we assume to exist. It is easy to see that after base change to $\bar{k}$ this exceptional collection remains a full exceptional collection on $\mathbb{P}^n_{\bar{k}}$. From Example 3.6 we know that $X$ has a semiorthogonal decomposition
\begin{eqnarray}
D^b(X)=\langle D^b(k), D^b(A),...,D^b(A^{\otimes n})\rangle,
\end{eqnarray}
where $A$ is the central simple $k$-algebra corresponding to $X$. By construction (see \cite{BER} for details), this semiorthogonal decomposition base changes to the semiorthogonal composition 
\begin{eqnarray}
D^b(\mathbb{P}^n_{\bar{k}})=\langle\mathcal{O},\mathcal{O}(1),...,\mathcal{O}(n+1)\rangle.
\end{eqnarray}
Assuming the existence of the full exceptional collection $\mathcal{E}^{\bullet}_1,..., \mathcal{E}^{\bullet}_{n+1}$ provides us with a semiorthogonal decomposition 
\begin{eqnarray}
D^b(X)=\langle \mathcal{E}^{\bullet}_1,...,\mathcal{E}^{\bullet}_{n+1}\rangle=\langle D^b(k),...,D^b(k)\rangle.
\end{eqnarray}
Note that $D^b(k)$ is equivalent to $\langle\mathcal{E}^{\bullet}_i\rangle$. This equivalence is given by sending the complex $k$, considered as concentrated in degree zero, to the object $\mathcal{E}^{\bullet}$. Note furthermore, that mutation commutes with base change to field extensions, i.e. for an exceptional pair $\{\mathcal{E}^{\bullet},\mathcal{F}^{\bullet}\}$ one has $(R_{\mathcal{F}^{\bullet}}\mathcal{E}^{\bullet})_L=R_{\mathcal{F}^{\bullet}_L}\mathcal{E}^{\bullet}_L$ for any field extension $k\subset L$. The same holds for left mutations. Now by assumption, the Braid group $B_{n+1}$ acts transitive on the set of full exceptional collections on $\mathbb{P}^n_{\bar{k}}$. Since mutation commutes with base change, we conclude that up to mutation the decomposition (3) base changes to a decomposition
\begin{eqnarray}
D^b(\mathbb{P}^n_{\bar{k}})=\langle\mathcal{O}(j),\mathcal{O}(j+1),...,\mathcal{O}(j+n+1)\rangle
\end{eqnarray}
for some $j\in \mathbb{Z}$. Twisting by powers of the canonical sheaf $\mathcal{O}(-n-1)$ and performing mutations again, we can assume $j=0$. This implies that the semiorthogonal decompositions (1) and (3) are equivalent. In particular, this yields $D^b(k)\simeq D^b(A)$. Finally, Theorem 4.2 shows that $A$, and hence $X$, is split. This completes the proof.
\end{proof}

\begin{cor}
Let $X$ be a Brauer--Severi curve or surface. Then $X$ admits a full exceptional collection consisting of arbitrary objects of if and only if $X$ is split. Moreover, a Brauer--Severi threefold admits a full exceptional collection of vector bundles if and only it is split.
\end{cor}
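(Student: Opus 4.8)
The plan is to apply Theorem 4.5 directly, feeding it the transitivity statements collected in Theorem 4.4 and treating each dimension separately. Since Theorem 4.5 makes the existence of a full exceptional collection on a non-split $n$-dimensional Brauer--Severi variety conditional only on the Braid group $B_{n+1}$ acting transitively on the corresponding set of full exceptional collections on $\mathbb{P}^n_{\bar{k}}$, the entire task reduces to checking, case by case, that this transitivity hypothesis is available for $n\leq 3$ and for the appropriate class of objects.

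First I would handle the curve case $n=1$. By Theorem 4.4 the group $B_2$ acts transitively on full exceptional collections of arbitrary objects on $\mathbb{P}^1_{\bar{k}}$, so Theorem 4.5 applies directly and yields that a Brauer--Severi curve admits a full exceptional collection of arbitrary objects if and only if it is split. The surface case $n=2$ is identical in structure: Theorem 4.4 supplies transitivity of $B_3$ on full exceptional collections of arbitrary objects on $\mathbb{P}^2_{\bar{k}}$, and Theorem 4.5 then gives the claim for Brauer--Severi surfaces.

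For the threefold case $n=3$ the available input is weaker. Theorem 4.4 only asserts that $B_4$ acts transitively on full exceptional collections consisting of \emph{sheaves} (respectively vector bundles), not on collections of arbitrary objects. I would therefore invoke the ``resp. sheaves or vector bundles'' version of Theorem 4.5 with $n=3$, which is exactly the variant whose hypothesis is met, obtaining that a Brauer--Severi threefold admits a full exceptional collection of vector bundles if and only if it is split. This restriction to vector bundles is the only genuine limitation in the argument, and it is forced rather than incidental.

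The main obstacle, such as it is, lies not in the deduction but in understanding why the threefold statement cannot be strengthened to arbitrary objects. As Remark 4.3 records, it is open whether every exceptional object on $\mathbb{P}^3_{\bar{k}}$ is a shift of a coherent sheaf; without this one cannot promote the known transitivity for sheaf collections to arbitrary-object collections, and hence cannot apply the stronger form of Theorem 4.5 in dimension three. Everything else is a clean substitution of hypotheses into the main theorem.
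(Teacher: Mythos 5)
Your proposal is correct and is essentially the paper's own proof, which simply cites the transitivity theorem together with the main theorem; your case-by-case substitution (arbitrary objects for $n\leq 2$, vector bundles for $n=3$) is exactly the intended deduction. Note only that your numbering is swapped relative to the paper: the transitivity statement is Theorem 4.3 and the open-problem remark about exceptional objects on $\mathbb{P}^3$ is Remark 4.4, not the other way around.
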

\begin{proof}
This follows from Theorems 4.3 and 4.5.
\end{proof}

\begin{rema}
\textnormal{The proof of Corollary 4.6 uses the fact that any full exceptional collection on $\mathbb{P}^n$ with $n\leq 3$ can be obtained by successive mutations from the collection $\mathcal{O},\mathcal{O}(1),...,\mathcal{O}(n)$ and vice versa. So in order to prove Conjecture 4.1 by using the technique presented in the proof of Theorem 4.5, one must know that the Braid group acts transitive on the set of full exceptional collections on higher dimensional projective spaces. Up to now, this is an open problem.}
\end{rema}

\small{MATHEMATISCHES INSTITUT, HEINRICH--HEINE--UNIVERSIT\"AT 40225 D\"USSELDORF, GERMANY}\\
E-mail adress: novakovic@math.uni-duesseldorf.de

\end{document}